\tikzset{fontscale/.style = {font=\relsize{#1}}
    }
\theoremstyle{plain}
\newtheorem{theorem}{Theorem}[section]
\newtheorem{proposition}[theorem]{Proposition}
\theoremstyle{definition}
\theoremstyle{remark}
\title{Schwartz duality for singularly perturbed nonlinear differential equations with Chebyshev spectral method\thanks{Preprint. Under review.\\}}
\author{
 \textsuperscript{ab}Eunwoo Heo
 \and 
 \textsuperscript{bc}Kwanghyuk Park 
 \and
 \textsuperscript{abc}Jae-Hun Jung\thanks{Corresponding author \\ \\ Email: hew0920@postech.ac.kr(Eunwoo Heo), pkh0219@postech.ac.kr(Kwanghyuk Park), 
 jung153@postech.ac.kr(Jae-Hun Jung)}}
\begin{document}

\setlength{\abovedisplayskip}{6pt} 
\setlength{\belowdisplayskip}{6pt} 

\maketitle
\keywords{\small Nonlinear differential equations with singular sources; Dirac delta function; Gibbs phenomenon; Schwartz duality; Spectral collocation method; Finite difference method.}

\begin{abstract}
Singularly perturbed differential equations with a Dirac delta function yield discontinuous solutions. Therefore, careful consideration is required when using numerical methods to solve these equations because of the Gibbs phenomenon. 
A remedy based on the Schwartz duality has been proposed, yielding superior results without oscillations. 
However, this approach has been limited to linear problems and still suffers from the Gibbs phenomenon for nonlinear problems.
In this note, we propose a consistent yet simple approach based on Schwartz duality that can handle nonlinear problems.
Our proposed approach utilizes a modified direct projection method with a discrete derivative of the Heaviside function, which directly approximates the Dirac delta function. 
This proposed method effectively eliminates Gibbs oscillations without the need for traditional regularization and demonstrates uniform error reduction.
\end{abstract}

\section{Introduction}
\label{intro}
Singularly perturbed partial differential equations (PDEs) frequently appear in various applications, especially for modeling phenomena where small changes in certain parameters can lead to significant variations in the solution, such as in boundary layers or reaction-diffusion processes. 
However, solving these PDEs numerically with guarantees of stability and non-oscillatory solution behavior is challenging due to the presence of singular or steep solution gradients. Various numerical methods have been developed to address these equations, including  \cite{negero2023novel,ahmed2024wavelets,elango2021finite, WANG20123403, SEVER20119966}, to name a few.
In particular, singular differential equations that contain Dirac delta function-type source terms are challenging to solve numerically. 
The most common approach relies on regularization methods to approximate the Dirac delta function, which includes methods such as the immersed boundary method \cite{YANG20097821, GUO2015529}, the volume of fluid method \cite{Lee2012} and the level set method \cite{SMEREKA200677}. 
However, when high-order spectral methods are used, the challenges increase, as spectral decomposition with orthogonal polynomials such as Chebyshev and Legendre polynomials is highly sensitive to solution regularity. 
Consequently, applying spectral approximation to differential equations with the Dirac delta function can lead to spurious oscillations, degrading accuracy and sometimes stability for nonlinear problems. 
Although these spectral methods are susceptible to oscillation, they offer the advantage of superior convergence accuracy compared to the finite difference method if used successfully.
Regularization of the Dirac delta function could prevent instability but often degrades accuracy. 
In \cite{jung2009note,YANG2017205}, Schwartz duality was used to achieve exact cancellation on the grid points and obtain numerically stable solutions with spectral accuracy. However, this approach was limited to linear problems, and oscillations persist for nonlinear problems.

In this note, we propose a consistent yet simple method to address these issues in nonlinear problems, for which we consider the following PDE for $u(x,t): [-1,1]\times [0,\infty) \rightarrow \mathbb{R}$
\begin{equation}
u_t + f_x(u)
= \delta(x-c),
\end{equation}
where $f(u)$ is, in general, a nonlinear function of $u$ and the singular source term is located at $x = c$ with $c \in (-1, 1)$.
To demonstrate the effectiveness of the proposed method, we focus on the Burgers' equation with 
$f(u) = \frac{1}{2}u^2$ and the initial and boundary conditions given by $u(x,0)= \beta > 0$ and $u(-1,t) = \beta$, respectively. 
Without loss of generality, we only consider the positive constant initial condition for simplification. For $\beta>0$, the left boundary condition should be assumed because of the well-posedness of the Burgers' equation. Similarly, if $\beta < 0$, the problem enforces the right boundary condition for the well-posedness.

In our proposed approach, we follow the same principles of Schwartz duality as in \cite{jung2009note} and employ the differentiation of the Heaviside function to represent the Dirac delta function using Chebyshev polynomials. While the effectiveness and exactness were demonstrated in \cite{jung2009note}, persistent oscillations were observed in equations with nonlinear convection terms. 
In this note, we observe that the oscillations are caused by the coupling between the nonlinear term and the solution. 
The key idea of the proposed method is to transform the Dirac delta function by scaling it with the nonlinear term, thus representing the Dirac delta function in a consistent manner with the nonlinear term coupled to the solution.

In Section~\ref{sec:Schwartz duality with Chebyshev polynomials}, we explain the Schwartz duality approximation and examine the Chebyshev polynomial for modeling singular sources. 
In Section~\ref{sec:Nonlinear differential equations perturbed with Dirac delta function}, we examine time-independent and time-dependent differential equations with singular source terms.
In Section~\ref{sec:A consistent approach: the proposed method}, we introduce a new method to mitigate oscillations in nonlinear equations.
In Section~\ref{sec:Numerical results}, we explore a variety of problem settings through numerical investigations. 
This includes solving higher-degree nonlinear equations, addressing equations with multiple source terms, focusing on two-dimensional equations, and discussing the generalization to finite difference methods with uniform points. 
In Section~\ref{sec:Concluding remark}, we summarize the findings and future research directions.

\section{Schwartz duality with Chebyshev polynomials}
\label{sec:Schwartz duality with Chebyshev polynomials}

For any real number \(c \in \mathbb{R}\), the Dirac delta function \(\delta_c = \delta(x-c) : \mathcal{S} \rightarrow \mathbb{R}\) is defined as 
for any test function \(\phi \in \mathcal{S}\), 
\(
\delta_c(\phi) = \int_{\mathbb{R}} \phi \; \delta(x-c) dx = \phi(c),
\)
where \(\mathcal{S}\) denotes the Schwartz space on \(\mathbb{R}\).
The Dirac delta function is commonly used in modeling phenomena involving singular characteristics for various applications such as \cite{severino2011, yu2023,lee2021regularization}. 
However, its numerical approximation is not well defined. When high-order methods are employed, the difficulty increases. 
We utilize Schwartz duality to approximate the Dirac delta function.
Elements of the Schwartz space $\mathcal{S}$ vanish at $\pm\infty$.
We refer to the elements of the dual space of Schwartz space, $\mathcal{S}^*$, as distributions. 
For a distribution $d \in \mathcal{S}^*$, its derivative $d'\in \mathcal{S}^*$ is defined such that $d'[\phi] = -d[\phi']$ for all $\phi \in \mathcal{S}$. 
This definition is motivated by the integration of test functions in $\mathcal{S}$. Any function $f \in \mathcal{S}$ can also be interpreted as an element of $\mathcal{S}^*$ through the expression $f[\phi] = \int_{-\infty}^{\infty} f\phi \, dx$ for $\phi \in \mathcal{S}$. 
By using integration by parts, we find $f'[\phi] = \int_{-\infty}^{\infty} f'\phi \, dx  = [f\phi]_{-\infty}^{\infty} - \int_{-\infty}^{\infty} f\phi' \, dx = - \int_{-\infty}^{\infty} f\phi' \, dx = - f[\phi']$.
To approximate the Dirac delta function $\delta_c$, we use the Heaviside function $H_c$ at $c$ defined by $H_c(x) = 0$ if $x < c$ and $H_c(x) = 1$ if $x \geq c$.
Particularly, both $\delta_c$ and $H_c$ are distributions, and we can show that for any \(\phi \in \mathcal{S}\),
\(
(H_c)'[\phi] = - \int_{-\infty}^{\infty} H_c \phi' \, dx = - \int_{c}^{\infty} \phi' \, dx = \phi(c) = \int_{-\infty}^{\infty} \delta_c \phi = \delta_c[\phi]
\).
Thus, we interpret $\delta_c$ as the derivative of $H_c$ by this duality.

We consider the spectral approximation, particularly based on the Chebyshev polynomials. 
Let \(T_j(x)\) be the \(j\)th order Chebyshev polynomial of the first kind, defined on \([-1, 1]\), satisfying the three-term recurrence relation with \(T_0(x) = 1\) and \(T_1(x) = x\):
\(T_{j+1}(x) = 2xT_j(x) - T_{j-1}(x), \; j \geq 1.\)
The approximation begins by considering the set of Chebyshev polynomials, $\{T_j(x)\}_{j=0}^N$, which are used to span the polynomial space $B_N$. 
Let \(\{x_i\}_{i=0}^N\) be the Chebyshev Gauss-Lobatto collocation points on \([-1, 1]\), where \(x_i = -\cos\left(\frac{i\pi}{N}\right)\). 
The Chebyshev differentiation matrix \(D\) is defined such that its off-diagonal elements \(D_{ij}\) (for \(i \neq j\)) are given by \(\frac{c_i}{c_j} \frac{(-1)^{i+j}}{x_i - x_j}\).
The coefficients \(c_k\) are set to 2 if \(k = 0\) or \(k = N\), and 1 otherwise. 
The diagonal elements \(D_{ii}\) are expressed as \(-\frac{x_i}{2(1-x_i^2)}\) if \(i \neq 0 , N\).
For the cases \(i = 0 , N\), the values are specifically calculated with \(D_{00} = \frac{2N^2 + 1}{6}\) and \(D_{NN} = -\frac{2N^2 + 1}{6}\) \cite{hesthaven2007spectral}.  

The spectral approximation, $u_N(x) \in B_N$ of $u(x)$ is given by the projection of $u(x)$ to $B_N$, 
\(
u_N(x) \approx \sum_{j=0}^{N} \hat{u_j} T_j(x),
\)
where \( \hat{u}_j \) are the expansion coefficients \cite{hesthaven2007spectral}. Let $D\in \mathbb{R}^{(N+1)\times{(N+1)}}$ be the differentiation matrix based on the Gauss-Lobatto collocation points defined above. 
Note that $u_N(x)$ is the polynomial of degree at most $N$ given by the interpolation based on $\{u_N(x_i) \}_{i=0}^N$.
Then the derivative $u'_N(x_i)$ is exactly determined by
$\sum_{j=0}^{N} D_{ij} u(x_j)$.
Then with the derivative matrix, the Dirac delta function on the collocation points is given by the direct projection of the \(\delta\)-function onto \(B_N\) at the collocation points \(x_i\) is given by
\(\delta_N(x_i - c) = \sum_{j=0}^{N} D_{ij} H_c(x_j).\)
This representation is computationally efficient to implement.

\newpage
\section{Nonlinear differential equations perturbed with Dirac delta function
}
\label{sec:Nonlinear differential equations perturbed with Dirac delta function}
First we consider the time-independent problem on \(x \in [-1, 1]\) as given by:
\begin{equation}
\frac{d}{dx} \left( \frac{1}{2} u^2(x) \right) = \delta(x-c),
\label{eq:time_indep_1}
\end{equation}
with $u(-1) = \beta > 0$.
Let \( \mathbf{u} \)
denote the numerical solution vector on the collocation points, and $\mathbf{u}(x_i)$ be the $i^{th}$ component of $\mathbf{u}$.
Then, using the derivative matrix $D$, the Chebyshev method for equation \eqref{eq:time_indep_1} yields 
\begin{equation}
D \left( \frac{1}{2} \mathbf{u}^2 \right)  = D \mathbf{H}_c, 
\label{system}
\end{equation}
where
$\mathbf{u}^2 = \mathbf{u}*\mathbf{u} $, 
\(\mathbf{u}(x_0)= \beta > 0,\) and 
\(\mathbf{H}_c = ( H_c(x_0), \ldots, H_c(x_N) )^T\). 
Here, the superscript and $*$ operation mean the element by element product.
\begin{proposition}
    If \(\mathbf{u}\) is the solution of the system \eqref{system},
    then \(\mathbf{u}^2 = 2\mathbf{H}_c + \vec{\beta^2}\)
    where $\vec{\beta^2} = (\beta^2, \cdots, \beta^2)^T$.
\end{proposition}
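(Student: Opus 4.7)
The plan is to observe that the system \eqref{system} can be rewritten as $D\bigl(\tfrac{1}{2}\mathbf{u}^2 - \mathbf{H}_c\bigr) = 0$, so the vector $\tfrac{1}{2}\mathbf{u}^2 - \mathbf{H}_c$ lies in the null space of the Chebyshev differentiation matrix $D$. Once we characterize that null space as exactly the constant vectors, it remains only to pin down the constant using the left boundary condition at $x_0 = -1$.

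First I would recall the standard fact that $D$ acts on vectors of nodal values $(p(x_0),\ldots,p(x_N))^T$ by returning $(p'(x_0),\ldots,p'(x_N))^T$ for the unique interpolating polynomial $p\in B_N$ of degree at most $N$. Since the map from nodal values to interpolating polynomial is a bijection, $Dv = 0$ forces $p' \equiv 0$, hence $p$ is constant and $v = k\vec{1}$ for some $k\in\mathbb{R}$. In particular, $D\vec{1}=0$, so the null space of $D$ is exactly $\Span\{\vec{1}\}$.

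Next I would apply this to $v = \tfrac{1}{2}\mathbf{u}^2 - \mathbf{H}_c$: there exists $k\in\mathbb{R}$ with
\begin{equation*}
\tfrac{1}{2}\mathbf{u}^2 - \mathbf{H}_c = k\,\vec{1}.
\end{equation*}
To identify $k$, evaluate the first component at $x_0=-1$. Because $c\in(-1,1)$, we have $x_0 < c$ and hence $H_c(x_0)=0$; the imposed boundary condition $\mathbf{u}(x_0)=\beta$ then gives $k=\tfrac{1}{2}\beta^2$. Multiplying through by $2$ yields $\mathbf{u}^2 = 2\mathbf{H}_c + \vec{\beta^2}$, which is the desired conclusion.

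The only genuinely delicate point is the null-space characterization: one must be careful to say $p$ is the interpolant in $B_N$, not just any function with those nodal values, so that $p' \equiv 0$ truly forces $p$ to be a constant polynomial. Beyond this, the argument is a direct manipulation, and no assumption about uniqueness of $\mathbf{u}$ itself is needed — the equation already determines $\mathbf{u}^2$ up to the single unknown $k$, which the boundary condition fixes.
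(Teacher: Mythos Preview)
Your argument is correct and follows the same overall structure as the paper: rewrite the system as $D\bigl(\tfrac{1}{2}\mathbf{u}^2-\mathbf{H}_c\bigr)=0$, identify the null space of $D$ as the constant vectors, and then fix the constant via the boundary value at $x_0$. The one substantive difference is in how the null space is pinned down. The paper argues linear-algebraically: it cites that the submatrix $\tilde{D}$ obtained by deleting the first row and column is nonsingular, so $\operatorname{rank}(D)\ge N$, and combines this with $D\vec{1}=0$ and rank--nullity to conclude $\ker D=\Span\{\vec{1}\}$. You instead use the polynomial interpretation: $Dv=0$ means the interpolant $p\in B_N$ has $p'(x_i)=0$ at $N+1$ nodes, and since $\deg p'\le N-1$ this forces $p'\equiv 0$, hence $p$ constant. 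Your route is more self-contained (no external nonsingularity result needed), while the paper's route makes the rank structure of $D$ explicit; both are equally valid here. One small clarification worth adding to your write-up: the sentence invoking the bijection between nodal values and interpolants is not quite the reason $p'\equiv 0$ follows --- the actual reason is the degree count just mentioned, so you might state that step explicitly.
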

\begin{proof}
Consider the Chebyshev differentiation matrix \( D \) of size \( (N+1) \times (N+1) \). 
We show that if \( D \mathbf{v} = 0\) for some vector \(\mathbf{v}=(v_0, v_1, \ldots, v_N)^T\), then \(\mathbf{v}\) is constant. 
If we define \( \tilde{D} \) as the submatrix of \( D \) obtained by excluding the first row and the first column, it is well-known that \( \tilde{D} \) is non-singular~\cite{funaro1988computing} so we know $\text{rank}(D) \geq N$.
Because the sum of the elements in each row of the Chebyshev matrix \( D \) is zero~\cite{trefethen2000spectral}, we have \(D \vec{\mathbf{1}} = 0\), with \(\vec{\mathbf{1}}=(1,1,\ldots,1)^T\).
Therefore, by rank-nullity theorem, we obtain \(\text{nullity}(D) = 1\) so the null space of \(D\) is \(\{\gamma \vec{1} \mid \gamma \in \mathbb{R} \}\).

Now, consider the the linear system \(D \left( \frac{1}{2} \mathbf{u}^2 \right)  = D \mathbf{H}_c \) with \(u(x_0)= \beta > 0\). From \(D \left( \frac{1}{2} \mathbf{u}^2 - \mathbf{H}_c \right) = 0\), there is a constant vector \(\mathbf{v}\) such that \(\frac{1}{2} \mathbf{u}^2 - \mathbf{H}_c = \mathbf{v}\). 
Evaluating at \(x_0\), \(\mathbf{v} = ( \frac{1}{2}\beta^2, \frac{1}{2}\beta^2, \ldots, \frac{1}{2}\beta^2)^T\) leads to \(\mathbf{u}^2 = 2\mathbf{H}_c + \vec{\beta^2}\), completing the proof.
\end{proof}
However, unlike Proposition~\ref{prop:numerical_method} that will be introduced later, the exact signs of the components of 
$\mathbf{u}$ cannot be determined.
Now we consider the following time-dependent differential equation
\begin{equation}
u_t + u_x = \delta(x-c), 
\quad
(x,t) \in [-1,1] \times [0, \infty), 
\label{eq:differential_equation}
\vspace{2mm}
\end{equation}
with the initial condition $u(x,0)=\beta$ and the boundary condition $u(-1,t) = \beta > 0$
for $(x,t) \in [-1,1] \times [0, \infty)$. 
In~\cite{jung2009note}, it was shown that the spectral approximation to the above equation can be obtained without oscillations, particularly for the steady-state solution.
However, for 
the nonlinear equation such as 
\begin{equation}
    u_t + \left(\frac{1}{2} u^2 \right)_x = \delta(x-c), 
    \quad
    (x,t) \in [-1,1] \times [0, \infty),
\label{eq:time_dep_2}
\end{equation}
with $u(x,0)=u(-1,t) = \beta > 0$ 
over the domain $(x,t) \in [-1,1] \times [0, \infty)$, whose  steady-state solution is given by $u = (\alpha - \beta) H_c + \beta$, where $\alpha = \sqrt{2 + \beta^2}$, the method in~\cite{jung2009note} yields still oscillatory approximation (see the left figure of  Figure~\ref{fig:oscillation_comparison}). 

\section{A consistent approach: the proposed method}\label{sec:A consistent approach: the proposed method}
In order to propose our method, consider Eqs.~\eqref{eq:differential_equation} and~\eqref{eq:time_dep_2}. 
The main difference between those two equations is that the first one is given in a linear form but the second in a nonlinear form for the advection term, i.e. $u u_x$ in the non-conservative form. 
The oscillations are induced by the nonlinear term of $f_x(u) = u u_x$. For Eq.~\eqref{eq:differential_equation}, the cancellation is obvious because the discretized version of Eq.~\eqref{eq:differential_equation} is 
$D {\mathbf{u}} = D {\bf H}_c$ for the steady-state solution. However, for Eq.~\eqref{eq:time_dep_2} which yields an oscillatory solution, the advection term $D\mathbf{u}$ is coupled with $\mathbf{u}$. Thus for the consistent method, we apply the same scaling to the right hand side as well, for which we exploit the definition of the Dirac delta function. 

The key idea of the proposed method is that for the solution \(u(x,t)\) we can use a scaled distribution for consistency as explained above \( \frac{u(x,t)}{u(c,t)} \delta_c\) if \(u(c,t) \neq 0\) (See Theorem~\ref{thm:proposed_method} for the positivity of $u(c,t)$).
That is, for any test function \(\phi\), we have 
\[
\left( \frac{u(x,t)}{u(c,t)} \delta_c \right) [\phi] = \int_{-\infty}^{\infty} \frac{u(x,t)}{u(c,t)} \phi \; \delta_c dx = \phi(c) = \delta_c [\phi].
\]
Hence, the distributions \( \frac{u(x,t)}{u(c,t)} \delta_c \) and \( \delta_c \) are equivalent when \( u(c,t) \neq 0 \) in the distribution sense.
Now, we consider the following differential equation to address Eq.~\eqref{eq:time_dep_2}:
\begin{equation}
u_t + \left(\frac{1}{2} u^2 \right)_x = \frac{u(x,t)}{u(c,t)} \delta(x-c), 
\quad
(x,t) \in [-1,1] \times [0, \infty),
\label{eq:adjusting_differential_equation}
\end{equation}
with the initial and boundary conditions 
$ u(x,0) = u(-1,t) = \beta > 0$.
Since the point \(c\) is not guaranteed to coincide with a collocation point, consider the two points \(x_{i^*}, x_{i^*+1}\) such that \(x_{i^*} < c \leq x_{i^*+1}\).
For this case, we use the average value of ${\mathbf{u}}(x_{i^*})$ and ${\mathbf{u}}(x_{i^*+1})$, which yields the following discretized equation for the steady-state solution of Eq.~\eqref{eq:adjusting_differential_equation}:
\begin{equation}
\mathbf{u} * D \mathbf{u} = \frac{2 \mathbf{u}}{\mathbf{u}(x_{i^*}) + \mathbf{u}(x_{i^*+1})} * D \mathbf{H_c}, 
\label{eq:proposed_numerical}
\end{equation}
where $\mathbf{u}(x_0) = \beta > 0$.
Note that we can use the average in denominator in Eq.~\eqref{eq:proposed_numerical} because $\mathbf{u}_n(x_{i^*})$ and $\mathbf{u}_n(x_{i^*+1}$) are positive as shown in Theorem~\ref{thm:proposed_method}. 

\vspace{4mm}
\begin{proposition}\label{prop:numerical_method}
For Eq.~\eqref{eq:proposed_numerical}, if \( \mathbf{u}(x_i) \neq 0 \) for all \( i \), then the solution is exactly obtained by 
\( {\mathbf{u}} = (\alpha - \beta ) \mathbf{H}_c + \vec{\beta} \),
where \(\alpha = \pm  \sqrt{2 + \beta^2}\).
\end{proposition}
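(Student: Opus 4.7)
The plan is to reduce the nonlinear system \eqref{eq:proposed_numerical} to the linear Schwartz--dual system studied in the preceding proposition by dividing both sides by $\mathbf{u}$ componentwise, which is justified precisely by the standing hypothesis $\mathbf{u}(x_i)\neq 0$. Since the factor $\mathbf{u}(x_{i^*})+\mathbf{u}(x_{i^*+1})$ is a scalar, the right-hand side may be rewritten as $\mathbf{u} * (k\, D\mathbf{H}_c)$ with $k := 2/(\mathbf{u}(x_{i^*})+\mathbf{u}(x_{i^*+1}))$. After cancelling $\mathbf{u}$ elementwise, the system becomes simply
\[
D\mathbf{u} \;=\; k\, D\mathbf{H}_c, \qquad\text{i.e.}\qquad D(\mathbf{u} - k\mathbf{H}_c) \;=\; 0.
\]

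Next I would invoke the null-space characterization established in the proof of the preceding proposition, namely $\ker(D) = \Span\{\vec{\mathbf{1}}\}$. This yields $\mathbf{u} = k\mathbf{H}_c + \gamma\vec{\mathbf{1}}$ for some constant $\gamma\in\mathbb{R}$. Evaluating at the left endpoint $x_0 = -1$, where $H_c(x_0)=0$ because $c\in(-1,1)$, and using the boundary condition $\mathbf{u}(x_0)=\beta$, fixes $\gamma = \beta$. Thus $\mathbf{u} = k\mathbf{H}_c + \vec{\beta}$, which already has exactly the form claimed, with the identification $k = \alpha-\beta$ still to be verified.

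To determine $k$ I would use the defining positions of $x_{i^*}$ and $x_{i^*+1}$: by construction $x_{i^*}<c\leq x_{i^*+1}$, so $H_c(x_{i^*})=0$ and $H_c(x_{i^*+1})=1$. Substituting back gives $\mathbf{u}(x_{i^*})=\beta$ and $\mathbf{u}(x_{i^*+1})=\beta+k$, and plugging these into the definition of $k$ produces the self-consistency equation
\[
k \;=\; \frac{2}{2\beta+k}, \qquad\text{equivalently}\qquad k^2 + 2\beta k - 2 \;=\; 0.
\]
Solving yields $k = -\beta \pm \sqrt{\beta^2+2} = \alpha - \beta$ with $\alpha = \pm\sqrt{2+\beta^2}$, completing the identification.

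I do not expect a serious obstacle: the only delicate point is the legitimacy of the componentwise division, which is guaranteed by the hypothesis $\mathbf{u}(x_i)\neq 0$, and the observation that $k$ is a scalar (not a vector), so it commutes through the Hadamard product with $D\mathbf{H}_c$. Everything else is bookkeeping that reuses the kernel computation for $D$ from the previous proposition and exploits the two evaluations at $x_{i^*}$ and $x_{i^*+1}$ to close a scalar quadratic equation for $k$.
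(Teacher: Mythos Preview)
Your proposal is correct and follows essentially the same route as the paper: cancel the Hadamard factor $\mathbf{u}$ using the nonvanishing hypothesis, invoke $\ker(D)=\Span\{\vec{\mathbf 1}\}$ to write $\mathbf{u}=k\mathbf{H}_c+\vec{\beta}$, and then close a scalar self-consistency equation at $x_{i^*},x_{i^*+1}$. The only cosmetic difference is that the paper eliminates $k$ by solving directly for $\mathbf{u}(x_{i^*+1})=\alpha$ via $\alpha^2-\beta^2=2$, whereas you solve the equivalent quadratic $k^2+2\beta k-2=0$ for $k=\alpha-\beta$.
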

\vspace{-6mm}
\begin{proof}
Assume that \( \mathbf{u}(x_i) \neq 0 \) for all \( i \).
Then, from \( \mathbf{u} * (D \mathbf{u} - \frac{2}{\mathbf{u}(x_{i^*}) + \mathbf{u}(x_{i^*+1})} D \mathbf{H}_c ) = 0 \), we have 
\(
D \mathbf{u} - \frac{2}{\mathbf{u}(x_{i^*}) + \mathbf{u}(x_{i^*+1})} D \mathbf{H}_c = 0 
\)
and so \( D \left(\mathbf{u} - \frac{2}{\mathbf{u}(x_{i^*}) + \mathbf{u}(x_{i^*+1})} \mathbf{H}_c \right) = 0 \). 
Now, we obtain \( \mathbf{u} - \frac{2}{\mathbf{u}(x_{i^*}) + \mathbf{u}(x_{i^*+1})} \mathbf{H}_c = \mathbf{v} \), where \(\mathbf{v}\) is a constant vector.
Since the first components of \( \mathbf{u} \) and \( \mathbf{H}_c \) are \(\beta\) and \(0\), respectively, \(\mathbf{v} = \vec{\beta}\) and we have 
\(
 \mathbf{u} = \frac{2}{\mathbf{u}(x_{i^*}) + \mathbf{u}(x_{i^*+1})} \mathbf{H}_c + \vec{\beta}.
\)
This implies that 
\( \mathbf{u}(x_0) = \mathbf{u}(x_1) = \ldots = \mathbf{u}(x_{i^*}) = \beta \).
Moreover, we have \( \mathbf{u}(x_{i^*+1}) = \frac{2}{\mathbf{u}(x_{i^*}) + \mathbf{u}(x_{i^*+1})} + \beta \), resulting in 
\( \mathbf{u}(x_{i^*+1}) = \pm\sqrt{2 + \beta^2} \).
Therefore, we have
\[
\mathbf{u} = \frac{2}{\mathbf{u}(x_{i^*}) + \mathbf{u}(x_{i^*+1})} \mathbf{H}_c + \vec{\beta} = (\alpha - \beta ) \mathbf{H}_c + \vec{\beta},
\]
where \(\alpha = \pm  \sqrt{2 + \beta^2}\).
\end{proof}

\vspace{4mm}
\begin{theorem}
\label{thm:proposed_method}
    Consider the following discretized problem with the Euler method for time integration: 
    \[
    \mathbf{u}_{n+1} = \mathbf{u}_{n} + dt_n \left( -
    \mathbf{u}_{n} * D \mathbf{u}_{n} + \frac{2 \mathbf{u}_{n}}{\mathbf{u}_{n}(x_{i^*}) + \mathbf{u}_{n}(x_{i^*+1})} * D \mathbf{H}_c \right), 
    \]
    where $\mathbf{u}_{0} = \vec{\beta}$ and $\mathbf{u}_{n}(x_0) = \beta > 0$, and \(dt_n\) is the time stepping \(\frac{C \min(dx_i)}{\lVert \mathbf{u}_n \rVert_{\infty}}\) for some constant \(C > 0\) where $dx_i = x_{i+1}-x_{i}$.
    If the sequence $\{\mathbf{u}_{n}\}$ converge to $\mathbf{u} = \lim_n \mathbf{u}_{n}$ such that $\mathbf{u}(x_i) \neq 0$ for all $i$, then  $\mathbf{u}$ is the steady-state solution of Eq.~\eqref{eq:time_dep_2}.
\end{theorem}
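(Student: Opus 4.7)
The plan is to pass to the limit $n \to \infty$ in the Euler iteration and recognize the limiting relation as the fixed-point equation whose solutions were classified in Proposition~\ref{prop:numerical_method}. Since we are told that $\mathbf{u}_n \to \mathbf{u}$ with $\mathbf{u}(x_i)\neq 0$ for all $i$, in particular $\|\mathbf{u}_n\|_\infty$ converges to a strictly positive limit, so the CFL-type step size $dt_n = C\min_i dx_i/\|\mathbf{u}_n\|_\infty$ stays in a compact interval bounded away from $0$. Combined with $\mathbf{u}_{n+1}-\mathbf{u}_n \to 0$ and the continuity of the right-hand side in $\mathbf{u}_n$ (which uses $\mathbf{u}(x_{i^*})+\mathbf{u}(x_{i^*+1})\neq 0$), this immediately yields
\[
\mathbf{u} * D\mathbf{u} \;=\; \frac{2\mathbf{u}}{\mathbf{u}(x_{i^*})+\mathbf{u}(x_{i^*+1})} * D\mathbf{H}_c,
\]
and the boundary value $\mathbf{u}(x_0)=\beta$ is preserved in the limit because the Dirichlet data is reimposed at every step.

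Next I would apply Proposition~\ref{prop:numerical_method} to the limit to conclude $\mathbf{u} = (\alpha-\beta)\mathbf{H}_c + \vec{\beta}$ with $\alpha = \pm\sqrt{2+\beta^2}$. It remains to exclude the negative root so that $\mathbf{u}$ matches the steady state $u=(\alpha-\beta)H_c+\beta$ with $\alpha=\sqrt{2+\beta^2}$ declared after Eq.~\eqref{eq:time_dep_2}. The strategy is to establish componentwise positivity of the Euler iterates by induction on $n$: starting from $\mathbf{u}_0=\vec{\beta}>0$, one factors the update as
\[
\mathbf{u}_{n+1}(x_i) \;=\; \mathbf{u}_n(x_i)\Bigl(1 - dt_n\,(D\mathbf{u}_n)(x_i) + dt_n\,\frac{2\,(D\mathbf{H}_c)(x_i)}{\mathbf{u}_n(x_{i^*})+\mathbf{u}_n(x_{i^*+1})}\Bigr),
\]
and argues that the CFL-type choice of $dt_n$ keeps the bracketed factor nonnegative, with $i=0$ handled separately by the Dirichlet condition. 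Passing to the limit then gives $\mathbf{u}\geq 0$ componentwise, so $\mathbf{u}(x_{i^*+1})=\alpha\geq 0$, which forces $\alpha=+\sqrt{2+\beta^2}$ and identifies $\mathbf{u}$ with the physical steady-state profile.

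The main obstacle will be precisely this positivity step. The factor $\|\mathbf{u}_n\|_\infty$ in the denominator of $dt_n$ is exactly what one needs to control $dt_n\,|(D\mathbf{u}_n)(x_i)|$ through the standard estimate $\|D\mathbf{u}_n\|_\infty \lesssim \|\mathbf{u}_n\|_\infty/\min_i dx_i$, which handles the pure advection part; however the source contribution $(D\mathbf{H}_c)(x_i)$ can have either sign at nodes away from $c$, so one needs some care to ensure the bracket does not turn negative during the transient. An alternative route, which may be cleaner but less quantitative, is to bypass uniform positivity and instead work only at the limit: since $\mathbf{u}_n(x_{i^*+1})$ starts at $\beta>0$ and the increments $\mathbf{u}_{n+1}-\mathbf{u}_n$ tend to zero, the sequence cannot jump across zero without passing arbitrarily close to it, contradicting $\mathbf{u}(x_{i^*+1})\neq 0$ together with the sign $\alpha<0$.
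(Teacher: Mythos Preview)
Your overall plan---pass to the limit, invoke Proposition~\ref{prop:numerical_method}, then rule out the negative root via a positivity-by-induction argument---is exactly the paper's strategy. The difference lies in \emph{where} positivity is established and \emph{how} the source term is handled.

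You propose to show $\mathbf{u}_n(x_i)>0$ for \emph{all} $i$, and correctly identify the obstacle: $(D\mathbf{H}_c)(x_i)$ changes sign across the grid, so the source contribution in the bracketed factor may be negative at nodes away from the jump. The paper sidesteps this entirely by proving positivity \emph{only} at the two indices $i^*$ and $i^*+1$, which is all that is needed to pin down the sign of $\alpha$ (since the limit profile is $\beta$ for $i\le i^*$ and $\alpha$ for $i\ge i^*+1$). At these two specific points the paper observes that $(D\mathbf{H}_c)(x_{i^*})>0$ and $(D\mathbf{H}_c)(x_{i^*+1})>0$, because the Heaviside jump occurs in $(x_{i^*},x_{i^*+1}]$; hence the source term in the bracket is nonnegative and can simply be dropped from below. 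What remains is to bound $dt_k\,|(D\mathbf{u}_k)(x_{i^*})|$ by something strictly less than $1$, and the paper does this by fixing $C=\min\{\tfrac{1}{2\|D\|_\infty},\tfrac12\}$, giving $dt_k\|D\mathbf{u}_k\|_\infty\le \tfrac12$ and hence bracket $>\tfrac12>0$. So the ``some constant $C$'' in the statement is in fact chosen in the proof, not arbitrary.

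Your alternative route---arguing that $\mathbf{u}_n(x_{i^*+1})$ cannot reach a negative limit without passing near zero---does not close: increments are only small asymptotically, so nothing prevents a large early jump across zero, and passing near zero at some finite $n$ does not contradict the limit being nonzero. The inductive argument localized to $x_{i^*},x_{i^*+1}$ is the clean fix.
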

\begin{proof}
By taking the limit of the equation, we have
\(
    \mathbf{u} = \mathbf{u} + dt_n \left( -
    \mathbf{u} * D \mathbf{u} + \frac{2 \mathbf{u}}{\mathbf{u}(x_{i^*}) + \mathbf{u}(x_{i^*+1})} * D \mathbf{H}_c \right). 
\)
Therefore, by Proposition~\ref{prop:numerical_method}, we know that 
\(
\mathbf{u} = (\alpha - \beta ) \mathbf{H}_c + \vec{\beta},
\)
where \(\alpha = \pm  \sqrt{2 + \beta^2}\).

Now, we show that $\mathbf{u}_n(x_{i^*}),\mathbf{u}_n(x_{i^*+1})>0$ for all $n\ge 0$ implying that $\alpha = \sqrt{2+\beta^2}$.
Suppose that $\mathbf{u}_k(x_{i^*})$ and $\mathbf{u}_k(x_{i^*+1})$ are positive.
Then $\mathbf{u}_{k+1}(x_{i^*})$ satisfies the following
\begin{align*}
\mathbf{u}_{k+1}(x_{i^*}) &= \mathbf{u}_k(x_{i^*})\left(1-dt_k(Du)(x_{i^*})+\frac{2dt
_k}{\mathbf{u}_k(x_{i^*})+\mathbf{u}_k(x_{i^*+1})}(D\mathbf{H}_c)(x_{i^*}) \right).
\end{align*} 
The value $\frac{2dt_k}{\mathbf{u}_k(x_{i^*})+\mathbf{u}_k(x_{i^*+1})}(D\mathbf{H}_c)(x_{i^*}))$ is positive, since the heaviside function $H_c$ changes the function value in $(x_{i^*},x_{i^*+1}]$, $D\mathbf{H}_c(x_{i^*})$ is positive.
Take $C=\min\{\frac{1}{2\|D\|_{\infty}}, \frac{1}{2}\}.$
Then it follows that
\(
dt_k\|D\mathbf{u}_k\|_{\infty}
    \le \frac{\min(dx_i)}{2\|D\|_{\infty}\|\mathbf{u}_k\|_{\infty}}\|D\|_{\infty}\|\mathbf{u}_k\|_{\infty}\le \frac{1}{2}.
\)
Therefore we can get from the above inequality that
\(
1-dt_k(D\mathbf{u}_k)(x_{i^*})+\frac{2dt_n}{\mathbf{u}_k(x_{i^*})+\mathbf{u}_k(x_{i^*+1})}(D\mathbf{H}_c)(x_{i^*})
> 1-\frac{1}{2}>0,
\)
which implies the positivity of $\mathbf{u}_{k+1}(x_{i^*}).$
We can also easily show that $\mathbf{u}_{k+1}(x_{i^*+1})$ is also positive because $D\mathbf{H}_c(x_{i^*+1})$ is positive.
Since $\mathbf{u}_0(x_{i^*})=\mathbf{u}_0(x_{i^*+1})=\beta>0$, by induction,
it holds that $\mathbf{u}_n(x_{i^*})$ and $\mathbf{u}_n(x_{i^*+1})$ are positive for all $n\ge0$.
\end{proof}

We extend our analysis to a more general Burgers' type nonlinear differential equation with a nonlinear convection term of higher degree given by:
\begin{equation}
u_t + \left(\frac{1}{m} u^m \right)_x = \delta(x-c, 
\quad
(x,t) \in [-1,1] \times [0, \infty), 
\label{eq:general_differential_equation}
\end{equation}
with $ u(x,0) = u(-1,t) = \beta > 0$. 
This equation, a generalization of the classic Burgers' equation, introduces a nonlinear dependency in the convection term \( u^m \), modeling more complex phenomena such as enhanced fluid flows or wave propagations, which may involve shock wave formations and steep gradient developments.
For Eq.~\eqref{eq:general_differential_equation}, we consider the following discretized problem with the Euler method for time integration:
\begin{equation}
\mathbf{u}_{n+1} = \mathbf{u}_{n} + dt_n \left( -
\mathbf{u}_{n}^{m-1} * D \mathbf{u}_{n} + \frac{m \mathbf{u}_{n}^{m-1}}{\sum_{j=1}^m \mathbf{u}_{n}^{m-j}(x_{i^*})\mathbf{u}_{n}^{j-1}(x_{i^*+1})}  * D \mathbf{H}_c \right), 
\label{eq:proposed_general_numerical}
\end{equation}
where $\mathbf{u}_{0} = \vec{\beta}$ and $\mathbf{u}_{n}(x_0) = \beta > 0$, and \(dt_n\) is the time stepping \(dt_n=\frac{C \min(dx_i)}{\lVert \mathbf{u_n^{m-1}} \rVert_{\infty}}\) for some constant \(C > 0\), where \(dx_i=x_{i+1}-x_{i}\).
We do not use the simple average but the average with multiplication because $\alpha$ and $\beta$ satisfy that $m=\alpha^m-\beta^m=(\alpha-\beta)(\alpha^{m-1}+\alpha^{m-2}\beta+\cdots+\alpha\beta^{m-2}+\beta^{m-1})$ and the steady state solution contains $(\alpha-\beta)\mathbf{H}_c$ terms.

\section{Numerical results}\label{sec:Numerical results}
In this section, we demonstrate through various examples that the proposed method accurately approximates the numerical solution to equations with sngular source terms.

\subsection{One-dimensional Burgers' equation with a point source term}\label{subsec:One-dimensional Burgers' equation}

\begin{figure}[htbp]
    \centering
    \hfill
    \begin{minipage}{0.45\textwidth}
        \centering
        \includegraphics[width=\linewidth]{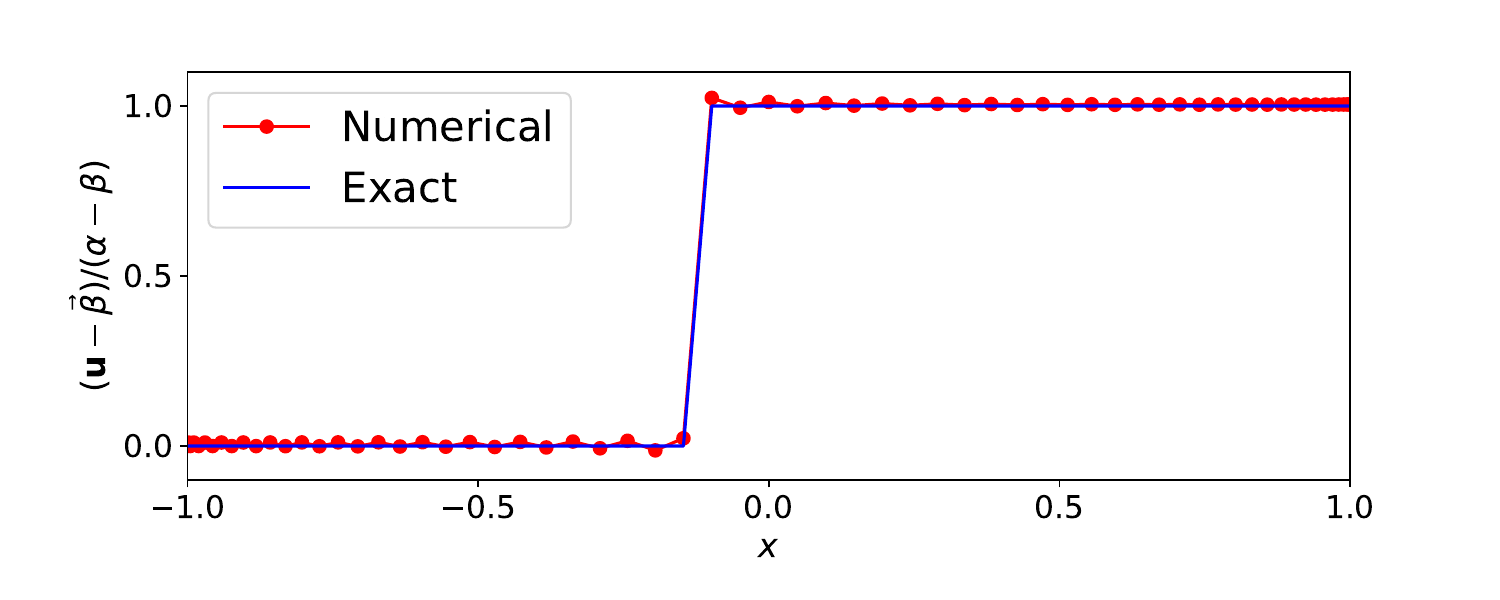} 
    \end{minipage}\hfill
    \begin{minipage}{0.45\textwidth}
        \centering
        \includegraphics[width=\linewidth]{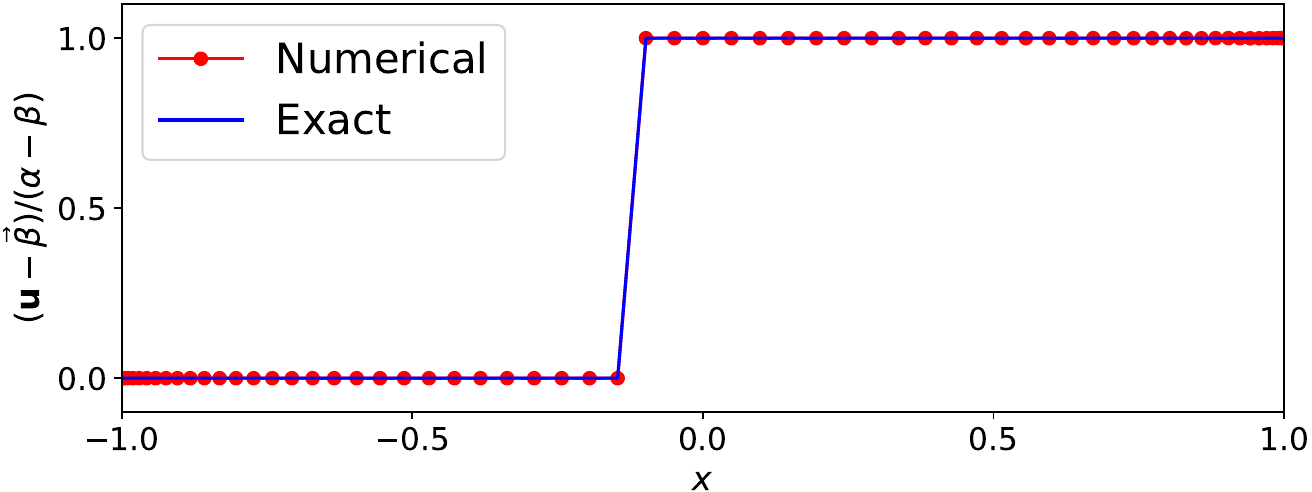}
    \end{minipage}\hfill
    \hfill
    \caption{\((\mathbf{u} - \vec{\beta}) / (\alpha - \beta)\) versus $x$, where \(\mathbf{u}\) represents the numerical solution of Eq.~\eqref{eq:time_dep_2} with \(c = -0.1\), \(\beta = 2\),\ \(C=0.5\) and \(N=64\). Left: The previous Schwartz method at time $t= 1.63$. Right: The proposed method at time $t= 1.63$.
    The exact \(\mathbf{H}_c\) is given in blue solid line and the approximation is shown with red dots.
    }
    \label{fig:oscillation_comparison}
\end{figure}

Figure~\ref{fig:oscillation_comparison} shows the numerical approximation, \((\mathbf{u}_{n} - \vec{\beta}) / (\vec{\alpha} - \vec{\beta})\), to Eq.~\eqref{eq:time_dep_2} at $t = 1.63$. The left and right figures show the results of the previous Schwartz method and the proposed method, respectively. 
The exact \(\mathbf{H}_c\) is given in blue solid line and the numerical approximation is in red on the grids.  
As shown in Figure \ref{fig:oscillation_comparison}, it is evident that the proposed method significantly mitigates oscillations for nonlinear problems on the grids compared to the previous method in the left figure.
In this note, we used the Euler method for time integration.
We obtain the similar result with the high order time integration methods such as the Runge-Kutta method.

\subsection{Higher-degree nonlinear equations}\label{subsec:Higher-degree nonlinear equations}

\begin{figure}[htbp]
    \centering
    \begin{minipage}{0.5\textwidth}
        \centering        \includegraphics[width=\textwidth]{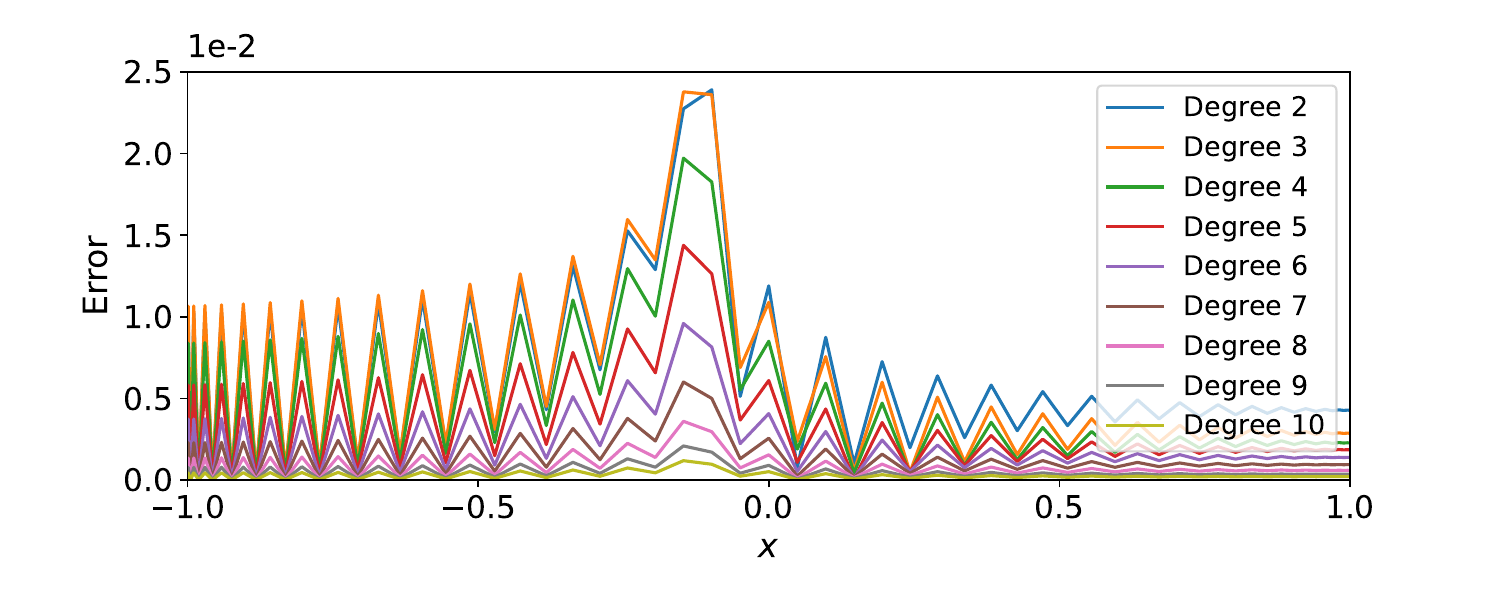} 
    \end{minipage}\hfill
    \begin{minipage}{0.5\textwidth}
        \centering        \includegraphics[width=\textwidth]{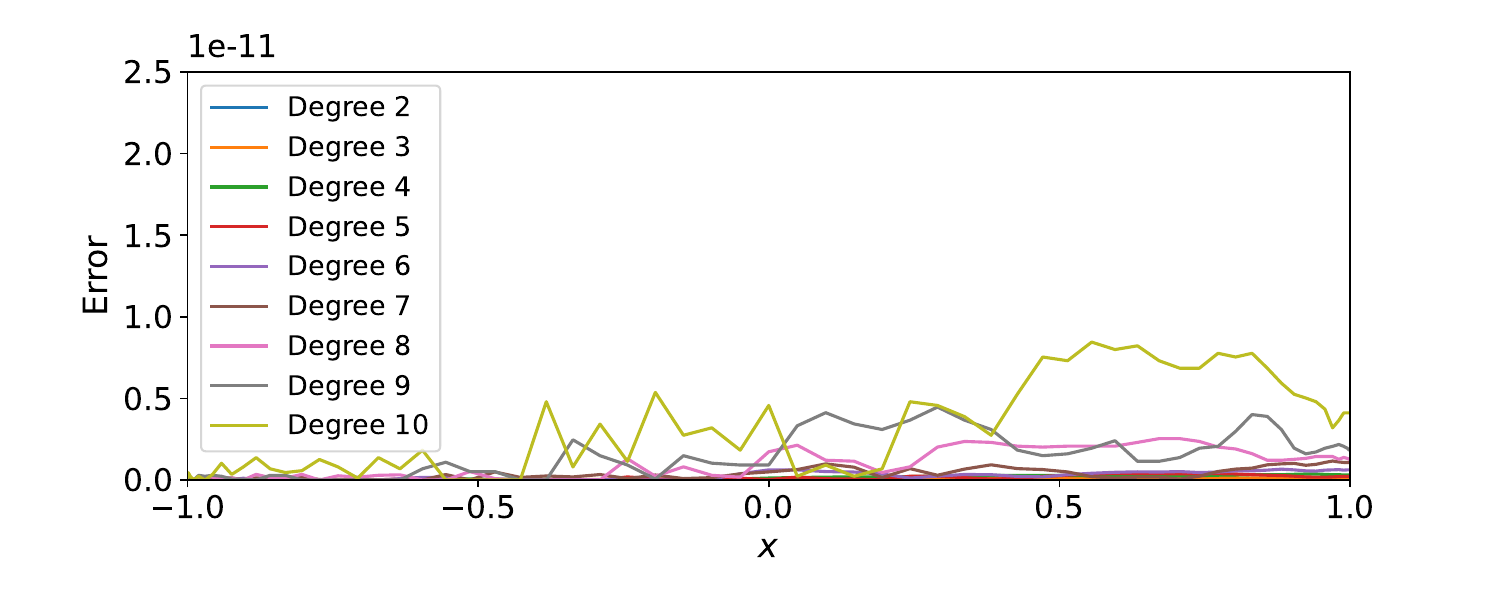}
    \end{minipage}
    \caption{\(|\mathbf{H}_c - (\mathbf{u}-\vec{\beta})/(\alpha - \beta)|\) for various degrees \(m = 2, 3, \ldots, 10\), for the previous method (Left) and the proposed method (Right) with \(c = -0.1\), \ \(\beta = 2\), \ \(C=0.5\) and \(N=64\) of the numerical method~(\ref{eq:proposed_general_numerical}).}
    \label{fig:method_degree_comparison}
\end{figure}

Let \( m \) in Eq.~\eqref{eq:general_differential_equation} be referred to as the degree of the equation.
The absolute value of the difference between the Heaviside function \( \mathbf{H}_c \) and \( (\mathbf{u}-\vec{\beta})/(\alpha - \beta) \) is shown in Figure~\ref{fig:method_degree_comparison}.
The left of Figure~\ref{fig:method_degree_comparison} represents the results with various degrees by the previous method, while the right shows the results by the proposed method.
The ratios of the maximum errors between the previous and proposed methods for each degree are provided in Table~\ref{tab:degree_ratio}.
\textcolor{black}{ Table \ref{tab:degree_ratio} clearly shows that the proposed method performs better.}
For example, in the degree \(2\) case, the max error ratio between the previous and proposed methods, denoted by \(\max\{|\mathbf{H}_c - (\mathbf{u}^{\text{prev}}-\vec{\beta})/(\alpha-\beta)| \} / \max\{|\mathbf{H}_c - (\mathbf{u}^{\text{new}}-\vec{\beta})/(\alpha-\beta)| \}\), is $287.91 \times 10^{9}$, where \(\mathbf{u}^{\text{prev}}\) and \(\mathbf{u}^{\text{new}}\) are the numerical solutions of the previous and proposed methods, respectively.

\begin{table}[htbp]
    \centering
    \begin{tabular}{|c|c|c|c|c|c|c|c|c|c|}
    \hline
    Degree & 2 & 3 & 4 & 5 & 6 & 7 & 8 & 9 & 10 \\ \hline
    Max Error Ratio & 287.91 & 150.02 & 53.63 & 36.00 & 14.44 & 5.16 & 1.41 & 0.46 & 0.14 \\ \hline
    \end{tabular}
    \vspace{2mm}
    \caption{
    Comparison of the degree-to-max-error ratio between previous and proposed methods, expressed in units of
    \(10^{9}\).
    }
    \label{tab:degree_ratio}
\end{table}

\subsection{Differential equation with multiple source terms} \label{subsec:multiple delta}
Singular PDEs with multiple point source terms arise in various fields, including the cell elasticity \cite{refId0}, water pollution \cite{hamdi2007identification} and acoustic waves \cite{Chen_2020}. However, solving such PDEs remains a significant challenge. In this section, we focus on a nonlinear one-dimensional PDE with multiple delta function source terms, formulated as follows:
\begin{equation}
\label{eq:burgers multiple source}
    u_t + u u_x = \sum_{i=1}^k a_i\delta(x-c_i),   
    \quad
    (x,t) \in [-1,1] \times [0, \infty),
\end{equation}
with $u(x,0) = u(-1,t) = \beta > 0$ for real numbers $a_i > 0$ and $c_1 < c_2 < \ldots < c_k$. 
Consider numbers $b_i = \sum_{m=1}^i 2a_m$ and $b_0 =0$ for all $i$.
Then, the steady-state solution of Eq.~\eqref{eq:burgers multiple source} is $\beta + \sum_{i=1}^k (\sqrt{b_i + \beta^2} - \sqrt{b_{i-1} + \beta^2 })H_{c_i}$, where $H_{c_i}$ is the Heaviside function at $x=c_i$.

To obtain a numerical solution of Eq.~\eqref{eq:burgers multiple source}, we apply the Gaussian approximation (left) and the proposed method \eqref{eq:proposed_numerical} (right) to each delta function $\delta(x - c_i)$ and the Euler method with the same time step as Section~\ref{subsec:Higher-degree nonlinear equations} similar to Theorem \ref{thm:proposed_method}. 
\textcolor{black}{For the Gaussian approximation, the Gaussian function is used with the mean $\mu=c_i$ and the standard deviation $\sigma=0.02$ as the approximation of a delta function $\delta(x-c_i)$.
Note that the Gaussian function ($\mu, \sigma$) should be discretized in a way that ensures the quadrature sum converges to unity.
}
As shown in Figure \ref{fig:mutiple_delta}, we can see that the proposed method \eqref{eq:proposed_numerical} successfully eliminates oscillations in the numerical solution of the one-dimensional Burgers' equation with multiple point sources and provide a non-dissipative numerical solution (right) while the Gaussian approximation yields a dissipative solution (left).
\begin{figure}[htbp]
    \centering
    \hfill
    \begin{minipage}[b]{0.5\textwidth}
        \includegraphics[width=\textwidth]{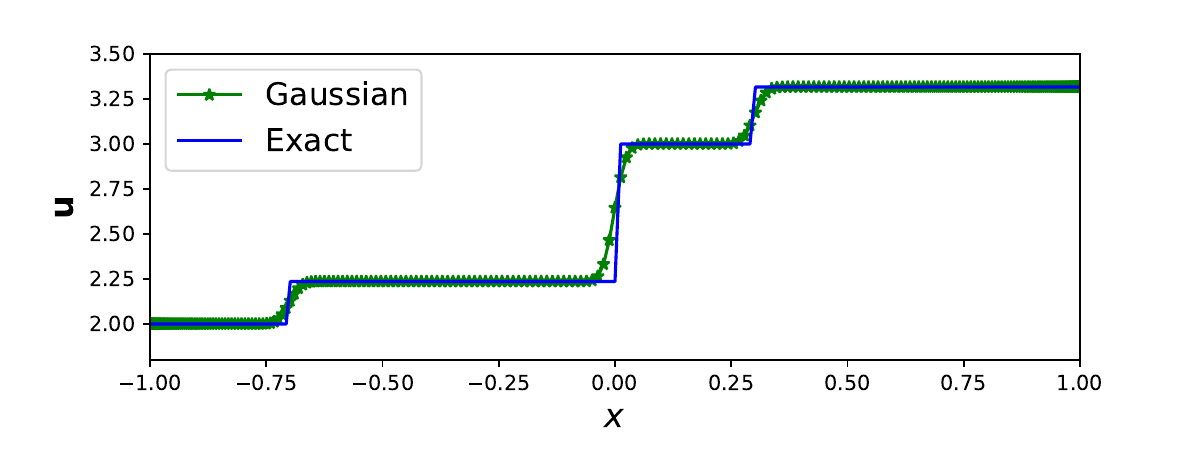}
    \end{minipage}\hfill
    \begin{minipage}[b]{0.5\textwidth}
        \includegraphics[width=\textwidth]{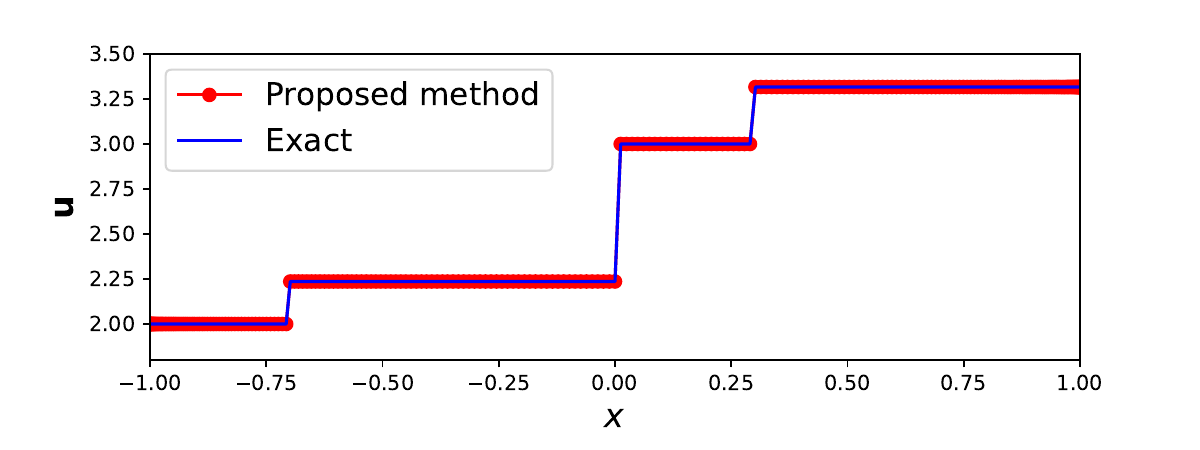}
    \end{minipage}\hfill
    \hfill
    \caption{Numerical simulation $\mathbf{u}$ of Eq.~\eqref{eq:burgers multiple source} with $N=256$, $\beta=2$, $C=0.5$ and the source term $0.5\delta(x+0.7) + 2\delta(x) + \delta(x-0.3)$ by Gaussian approximation with the standard derivation $\sigma=0.02$ (left, green) and the proposed method (right, red) at time $t=1.5$. The exact solution $2 + (\sqrt{5}-2)H_{-0.7}+ (3-\sqrt{5})H_{0} + (\sqrt{11}-3)H_{0.3}$ is given in blue solid line.}
    \label{fig:mutiple_delta}
\end{figure}

\subsection{Two-dimensional differential equations}\label{subsec:Two-dimensional equations}
\textcolor{black}{The proposed method can be extended to two-dimensional differential equations.}
First, we solve the two-dimensional advection equation, given by
\begin{equation}
u_t + \textcolor{black}{a}u_x + \textcolor{black}{b}u_y
= \delta_{(0,0)}(x,y), 
\quad
\textcolor{black}{(x,y,t) \in [-1,1]^2 \times [0, \infty)},
\label{eq:2d_advec}
\end{equation}
with positive $a$ and $b$, consequently the boundary conditions $u(-1,y,t) = \sin(\pi(-1-t))\sin(\pi(y-t))$, 
$u(x,-1,t) = \sin(\pi(x-t))\sin(\pi(-1-t))$, and 
the initial condition $u(x,y,0) = \sin(\pi x) \sin(\pi y)$.
\textcolor{black}{By considering the characteristic curve  $X_{(a,b)}(t) = (x(t), y(t)) = (at+c, bt+d)$ for some $c,d \in \mathbb{R}$,
we have 
\(
\frac{d}{dt} u(X_{(a,b)}(t), t) = \frac{d}{dt} u(x(t), y(t), t) = u_t + au_x + bu_y = \delta_{(0,0)}(X_{(a,b)}(t)).
\)
Therefore, the exact solution can be determined to be $u = \sin(\pi(x-t)) \sin(\pi(y-t)) + 1$ if $bx=ay \geq 0$, and $u = \sin(\pi(x-t)) \sin(\pi(y-t))$ otherwise.
To find the two-dimensional numerical solution $\mathbf{u}$ of Eq.~\eqref{eq:2d_advec}, consider the test function $T(x,y) = 1$ if $bx=ay \geq 0$ and $T(x,y)=0$ otherwise.
Let $\mathbf{T}$ be the matrix representation of the function $T$ over the grid. 
Then, we approximate the delta function $\delta_{(0,0)}$ using the Chebyshev matrix $D$ as $a\mathbf{T}D^t + bD \mathbf{T}$. 
This is meant to approximate $(a\frac{\partial}{\partial x} + b\frac{\partial}{\partial y}) T$.
This is consistent with the use of the Heaviside function in the one-dimensional case.
Figure~\ref{fig:sine_schwartz} shows the numerical solution to Eq.~\eqref{eq:2d_advec} with the delta function $\delta_{(0,0)}$ as $\mathbf{T}D^t + D\mathbf{T}$ for $a=b=1$.}

\begin{figure}[htbp]
    \centering
    \begin{subfigure}[b]{0.33\textwidth}
        \includegraphics[width=\textwidth]{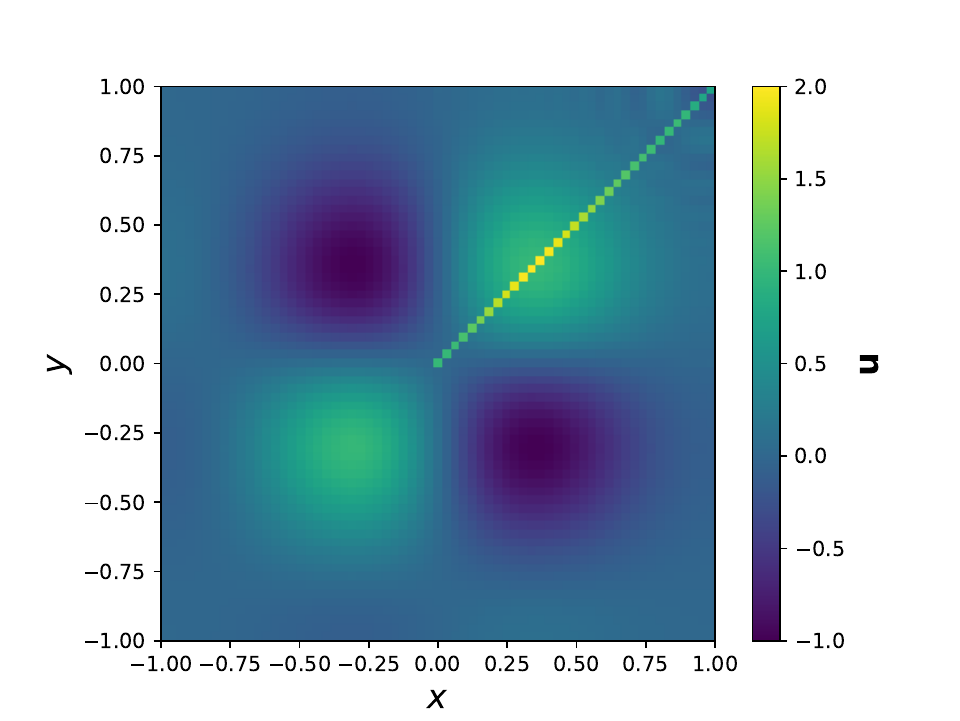}
        \caption{Numerical solution of Eq.~\eqref{eq:2d_advec}}
        \label{fig:sine_schwartz}
    \end{subfigure}\hfill
    \begin{subfigure}[b]{0.33\textwidth}
        \includegraphics[width=\textwidth]{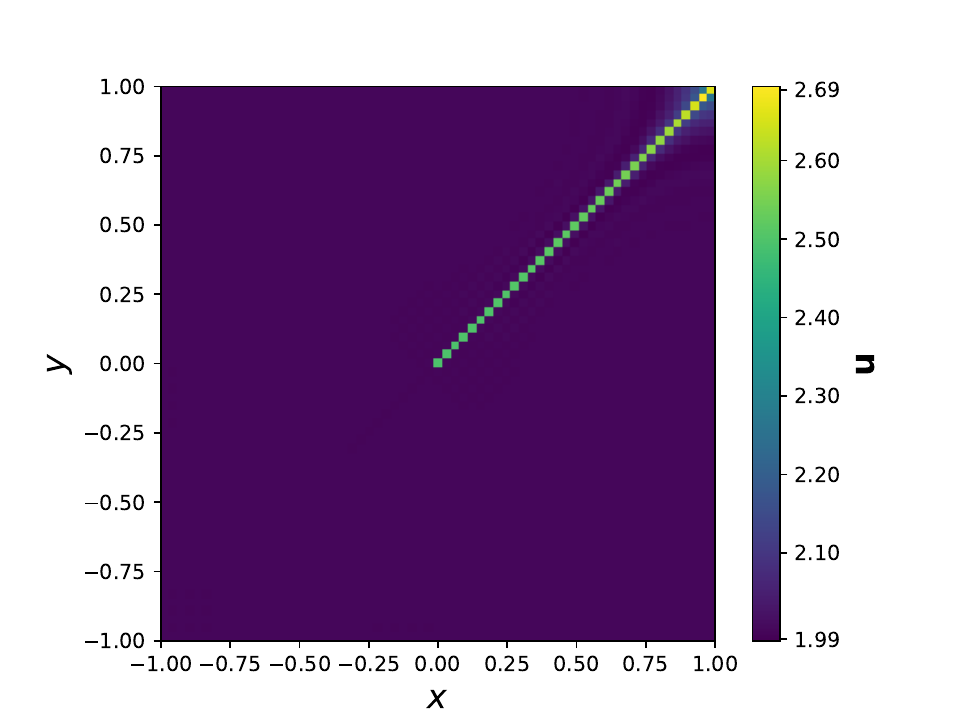}
        \caption{Numerical solution of Eq.~\eqref{eq:2d-nonlinear}}
        \label{fig:schwartz_2d_bad}
    \end{subfigure}\hfill
    \begin{subfigure}[b]{0.33\textwidth}
        \includegraphics[width=\textwidth]{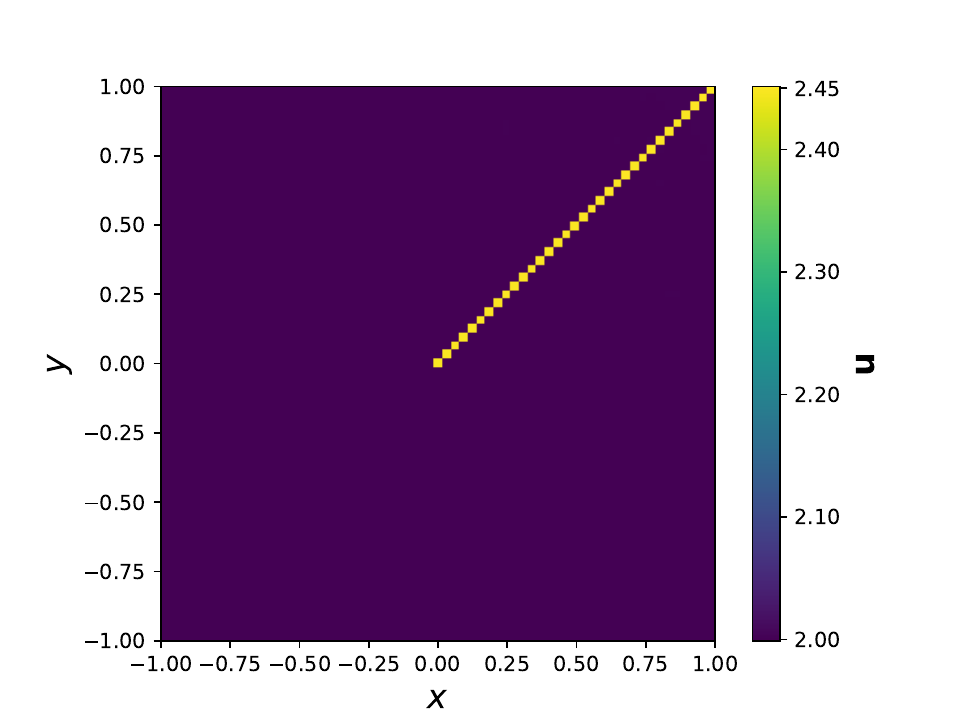}
        \caption{Numerical solution of Eq.~\eqref{eq:2d-nonlinear}}
        \label{fig:schwartz_2d_good}
    \end{subfigure}
    \caption{Numerical solutions of Eqs.~\eqref{eq:2d_advec} with $a=b=1$ and~\eqref{eq:2d-nonlinear}  by approximating the delta function $\delta_{(0,0)}$ as $\mathbf{T}D^t+D\mathbf{T}$ in (a) and (b), and as $\frac{2\mathbf{u}}{\mathbf{u}(x_i^*,x_i^*)+\mathbf{u}(x_{i+1}^*,x_{i+1}^*)}(\mathbf{T}D^t+D\mathbf{T})$ in (c) with \(\beta = 2\), \ \(C=0.5\) and \(N=64\) at time $t= 1$.}
    \label{fig:2d_nonlinear_bad_good}
\end{figure}

The above approach can be applied to nonlinear problems as well. Consider the two-dimensional non-linear differential equation, given by
\begin{equation}
u_t + u u_x + u u_y = \delta_{(0,0)}(x,y),
\quad
(x,y,t) \in [-1,1]^2 \times [0, \infty),
\label{eq:2d-nonlinear}
\end{equation}
with $u(0,y,t) = u(x,0,t) = u(x,y,0) = \beta > 0$.
By considering the characteristic curve $X_{u}(t)$ with $\frac{d}{dt} X_u(t)=(u,u)$, the exact steady-state solution of Eq.~\eqref{eq:2d-nonlinear} goes to $u(x,y) = \sqrt{2+\beta^2}$ if $x=y\ge0$ and $\beta$ if not. 
To calculate the numerical solution, we use the proposed method \eqref{eq:proposed_numerical} to approximate the delta function $\delta_{(0,0)}$ as
$\frac{2\mathbf{u}}{\mathbf{u}(x_i^*,x_i^*)+\mathbf{u}(x_{i+1}^*,x_{i+1}^*)}(\mathbf{T}D^t+D\mathbf{T})$
and use the Euler method with the same time stepping as in Section~\ref{subsec:Higher-degree nonlinear equations}.
Figures~\ref{fig:schwartz_2d_bad} and~\ref{fig:schwartz_2d_good}, respectively, show the numerical solutions of Eq.~\eqref{eq:2d-nonlinear} by approximating the delta function $\delta_{(0,0)}$ as $\mathbf{T}D^t + D\mathbf{T}$ and as $\frac{2\mathbf{u}}{\mathbf{u}(x_i^*, x_i^*) + \mathbf{u}(x_{i+1}^*, x_{i+1}^*)} (\mathbf{T}D^t + D\mathbf{T})$.
Figure~\ref{fig:schwartz_2d_good} demonstrates that the solution converges exactly to $\mathbf{u}(x,y) = \sqrt{6}$ if $x=y \geq 0$ and to $\beta$ otherwise.

\subsection{Generalization}\label{subsec:Generalization to finite difference method with uniform points}

The proposed method can be further generalized to other collocation methods, such as the finite difference method. The key idea is that once the numerical derivative operator is defined properly, the proposed method can be applied in the same way as the Chebyshev method.
First, we experimentally demonstrate that the numerical solutions of Eqs.~\eqref{eq:2d_advec} and~\eqref{eq:2d-nonlinear} do not perform well when the source term is approximated using a two-dimensional  Gaussian function even though the quadrature sum of such a function over a two-dimensional grid converges to unity. In particular, the approximation becomes even less effective at Chebyshev Gauss–Lobatto collocation points. As an example, 
we use the first order forward difference matrix \(D\) on a uniform grid \(\{x_i\}_{i=0}^N\) over the interval \([-1,1]\). 
\textcolor{black}{
Figures~\ref{fig:sine_gaussian} and \ref{fig:gaussian_nonlinear} show the numerical approximations of Eqs.~\eqref{eq:2d_advec} and \eqref{eq:2d-nonlinear}, respectively, both with the delta function $\delta_{(0,0)}(x,y)$ regularized as the two-dimensional Gaussian function.
Figure~\ref{fig:Spectral_nonlinear_finitedifference} shows the approximation for Eq.~\eqref{eq:2d-nonlinear} with 
the delta function $\delta_{(0,0)}(x,y)$ implemented with the proposed method as $\frac{2\mathbf{u}}{\mathbf{u}(x_i^*,x_i^*)+\mathbf{u}(x_{i+1}^*,x_{i+1}^*)}(\mathbf{T}D^t+D\mathbf{T})$.
As shown in the figure, the Gaussian method does not yield accurate results. 
Figure~\ref{fig:Spectral_nonlinear_finitedifference} demonstrates that our proposed method can be applied to other collocation methods and yields the exact solution using a similar approach as employed with the spectral method above.
}

\begin{figure}[htbp]
    \centering
    \hfill
    \begin{subfigure}[b]{0.33\textwidth}
        \includegraphics[width=\textwidth]{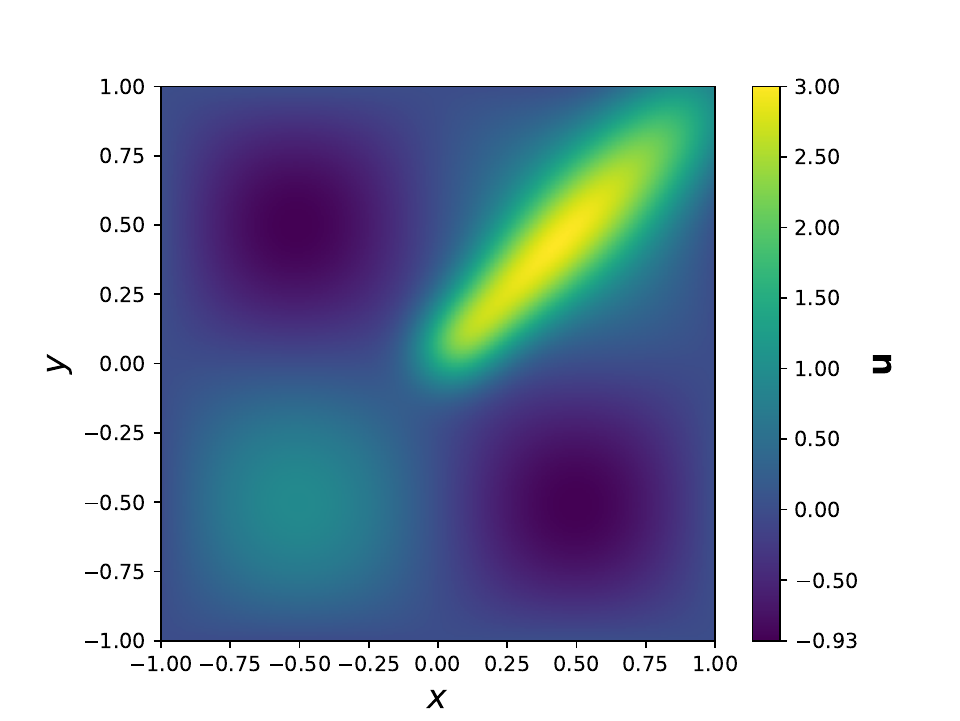}
        \caption{Numerical solution of Eq.~\eqref{eq:2d_advec}}
        \label{fig:sine_gaussian}
    \end{subfigure}\hfill
    \begin{subfigure}[b]{0.325\textwidth}
        \includegraphics[width=\textwidth]{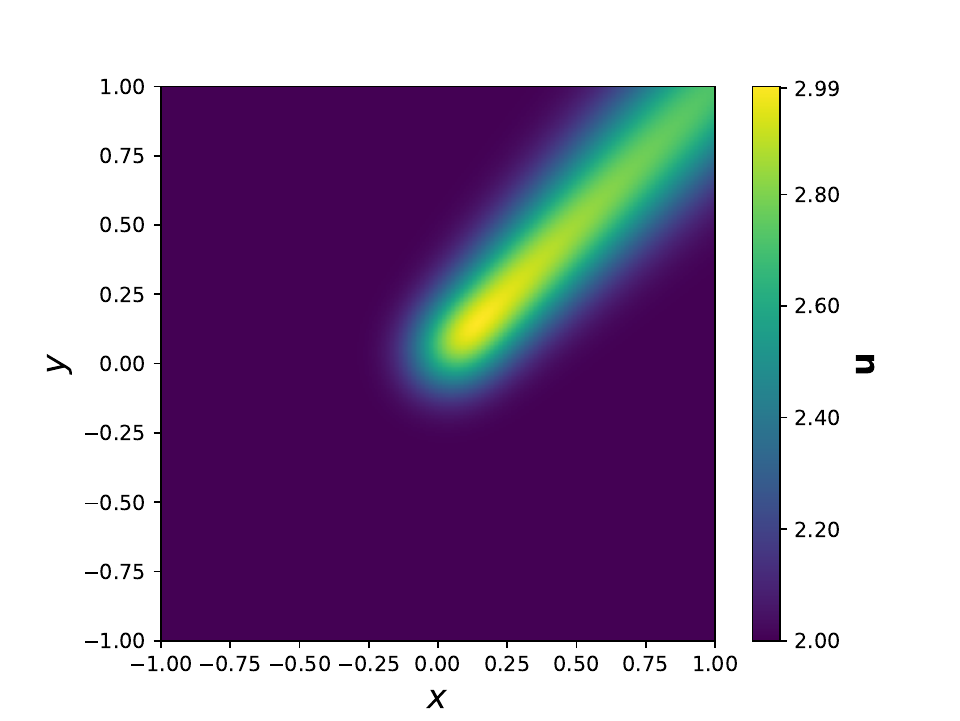}
        \caption{Numerical solution of Eq.~\eqref{eq:2d-nonlinear}}
        \label{fig:gaussian_nonlinear}
    \end{subfigure}\hfill
    \begin{subfigure}[b]{0.325\textwidth}
        \includegraphics[width=\textwidth]{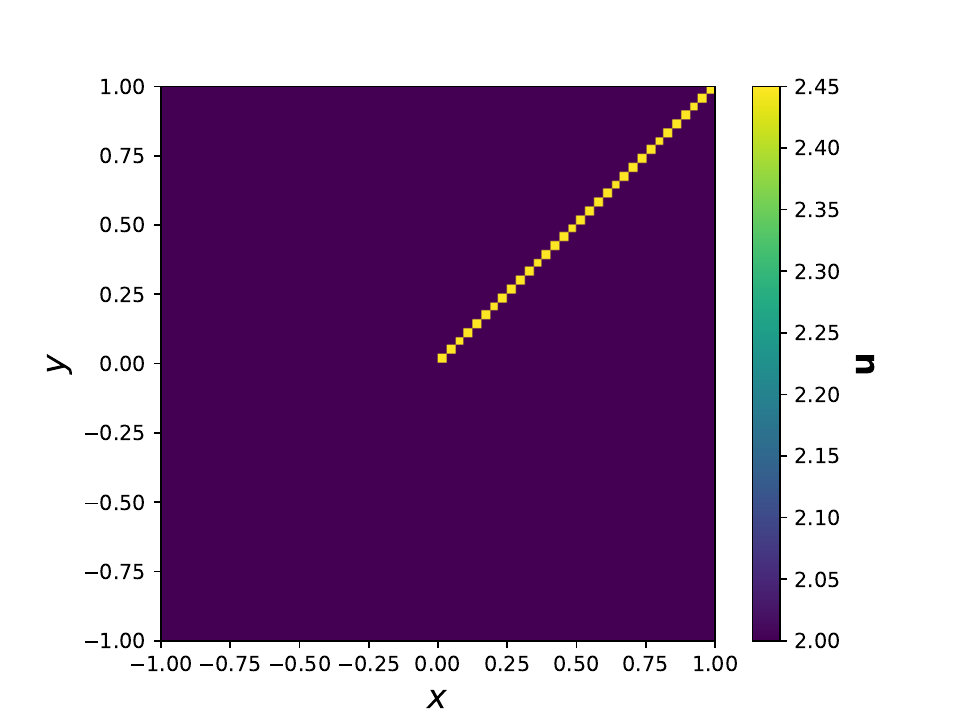}
        \caption{Numerical solution of Eq.~\eqref{eq:2d-nonlinear}}
        \label{fig:Spectral_nonlinear_finitedifference}
    \end{subfigure}\hfill
    \hfill
    \caption{Numerical solution of Eqs.~\eqref{eq:2d_advec} \textcolor{black}{with $a=b=1$} and~\eqref{eq:2d-nonlinear} by approximating the delta function $\delta_{(0,0)}$ as two-dimensional Gaussian functions at time $t=2$ with \(\beta = 2\), $N=128$ and the standard deviation $\sigma=0.1$ for figures (a) and (b). For Figure (c), the delta function $\delta_{(0,0)}$ is approximated $\frac{2\mathbf{u}}{\mathbf{u}(x_i^*,x_i^*)+\mathbf{u}(x_{i+1}^*,x_{i+1}^*)}(\mathbf{T}D^t+D\mathbf{T})$ at time $t=1$ with \(\beta = 2\), \ \(C=0.5\) and \(N=64\). Here $D$ is the forward difference matrix.}
    \label{fig:compare_gaussian}
\end{figure}

\section{Concluding remark}\label{sec:Concluding remark}
Singularly perturbed differential equations with Dirac-delta type sources present a significant computational challenge when seeking numerical solutions. Any high-order method suffers from Gibbs oscillations due to the discontinuous nature of the solutions. In this note, we demonstrated that a simple remedy based on Schwarz duality can eliminate the Gibbs phenomenon and yield exact solutions for collocation methods, even in nonlinear problems. First, we showed that the method is effective for high-order methods such as spectral methods, and then generalized it to difference equations. 
Our numerical examples for both linear and nonlinear problems, in both one- and two-dimensional cases, confirm that the proposed method is accurate and efficient. In future work, we will develop a systematic approach for constructing a consistent derivative matrix for higher-order approximations for general collocation methods and explore cases where the singular sources are dynamic.

\section*{Acknowledgments}
This work is supported by National Research Foundation (NRF) of Korea under the grant number 2021R1A2C3009648, POSTECH Basic Science Research Institute under the NRF grant number 2021R1A6A1A10042944 and partially NRF grant funded by the Korea government (MSIT) (No. RS-2023-00219980).

\newpage
\bibliography{reference} 
\bibliographystyle{elsarticle-num}

\end{document}